\newcommand{\cF}{{\mathcal{F}}}
\newcommand{\CP}{{\mathbb{CP}}}
\newcommand{\R}{{\mathbb{R}}}
\newcommand{\F}{{\mathcal{F}}}
\newcommand{\N}{{\mathbb{N}}}
\newcommand{\D}{{\mathbb{D}}}
\renewcommand{\S}{{\mathbb{S}}}
\newcommand{\Z}{{\mathbb{Z}}}
\newcommand{\Op}{{\mathcal{O}p}}
\newcommand{\id}{{\operatorname{id}}}
\renewcommand{\d}{{\operatorname{d}}}
\newcommand{\wtd}{\widetilde}
\newcommand{\Crit}{{\operatorname{Crit}}}
\newcommand{\rot}{{\operatorname{rot}}}
\newtheorem{lemma}{Lemma}
\newtheorem{proposition}[lemma]{Proposition}
\newtheorem{theorem}[lemma]{Theorem}
\newtheorem{corollary}[lemma]{Corollary}
\newtheorem{definition}[lemma]{Definition}
\newtheorem*{theorem*}{Theorem}
\newtheorem*{maintheorem*}{Main Theorem}
\newtheorem*{question*}{Question}
\newtheorem*{proposition*}{Proposition}
\theoremstyle{remark}
\title{An elementary proof of Novikov's Theorem}
\author{Samuel Ranz}
\author{Lauran Toussaint}
\begin{document}

\begin{abstract}
Novikov's theorem states that, given a taut (codimension-one) foliation on a closed $3$-manifold $M$, the fundamental group of any leaf injects into the fundamental group of $M$.
We use foliated branched covers to give a simple proof of this result.
\end{abstract}

\maketitle

\section{Introduction}

Codimension-one foliations\footnote{Unless explicitly stated otherwise, we will always assume our foliations to be smooth, coorientable, and codimension-one.} are extremely flexible and, apart from the Euler characteristic, do not restrict the topology of the manifolds on which they live \cite{Thurston,Eynard-bontemps,Meigniez}. Therefore, it is interesting to consider special classes of foliations. 
One such class consists of taut foliations, and some of their topological properties are described by Novikov's theorem \cite{Novikov}.
In this note, we provide a new proof of this classical result. 

Let us start by recalling the definition:
\begin{definition}\label{def:TautFoliation}
A codimension-one foliation $\cF$ on a closed, orientable, $3$-manifold $M$ is \textbf{taut} if any of the following equivalent conditions hold:
\begin{enumerate}[(i)]
\item There exists a transverse loop intersecting every leaf of $\F$.
\item For any leaf $L$ of $\F$ there exists a closed transversal intersecting $L$.
\item\label{taut3} There exists a closed form $\omega \in \Omega^2(M)$ whose restriction $\omega|_\cF$ is non-degenerate. 
\item There is a Riemannian metric on $M$ for which the leaves of $\F$ are minimal surfaces.
\end{enumerate}
\end{definition}
For a detailed discussion of this definition we refer the reader to \cite{CalegariBook,CandelConlon}.
Taut foliations closely interact with the topology of the ambient manifold, as the following theorem shows.
\begin{theorem*}[\cite{Novikov}]\label{thm:Novikov}
Let $\F$ be a taut foliation on a closed, orientable $3$-manifold $M$. Then for any leaf $L$ the inclusion $\iota:L \to M$ induces an injection
\[ \iota_*:\pi_1(L) \to \pi_1(M).\]
\end{theorem*}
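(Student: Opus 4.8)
The plan is to argue by contradiction: suppose some loop $\gamma$ in a leaf $L$ is null-homotopic in $M$ but not in $L$. The classical Novikov argument then produces a Reeb component, whose boundary torus leaf is incompressible in the manifold but bounds a solid torus — and a taut foliation cannot contain a Reeb component because a closed transversal cannot meet the toral leaf (it would have to enter the solid torus and be trapped). The goal here is to carry out the ``produces a Reeb component'' step by elementary means, and the new input is to pull the problem back along a foliated branched cover so that one may assume the disk of contradiction is embedded and transverse to $\cF$.

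First I would reduce to the case where $\gamma$ bounds an embedded disk $D\hookrightarrow M$ that is in \emph{general position} with respect to $\cF$, so the singular foliation $\cF\cap D$ on $D$ has only finitely many singularities (centers and saddles) in the interior, with $\partial D = \gamma \subset L$ a leaf-arc-free circle (a closed transversal or a circle tangent along $L$). Here is where the foliated branched cover enters: starting from an immersed spanning disk given by $\iota_*\gamma = 1$, one takes a branched cover of $M$ along a link meeting $\cF$ in a controlled way to promote the immersion to an embedding while keeping the pulled-back foliation taut; I would cite the branched-cover machinery developed earlier in the paper for this. Having an embedded $D$, the next step is the standard Haefliger–Novikov combinatorics: a least-area or innermost argument on $D$ shows that if $\partial D$ is transverse to $\cF$ one can find a ``vanishing cycle'', i.e.\ a circle leaf on $D$ bounding a disk on which all leaves are circles (a family of closed leaves shrinking to a center). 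The holonomy along such a vanishing cycle is then trivial on one side, and a maximality/limit argument extends the family of compact leaves until it limits onto a compact leaf $T$ with one-sided trivial holonomy; since $M$ is a $3$-manifold this $T$ is a torus and its union with the shrinking disks is (a neighborhood of) a Reeb component.

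The final step is the tautness obstruction. By Definition~\ref{def:TautFoliation}\eqref{taut3} there is a closed $2$-form $\omega$ with $\omega|_\cF$ nondegenerate; equivalently (or directly from (i)) there is a closed transversal through every leaf, in particular through $T$. But a closed transversal hitting the torus leaf $T$ of a Reeb component must cross into the solid-torus side, where the leaves spiral onto $T$, and positivity of $\int \omega$ over the leaves forces the transversal to exit again through $T$ — running the holonomy argument shows it cannot, giving the contradiction. (Alternatively: integrate $\omega$ over the Reeb component to get $0$ against non-degeneracy.) Hence no such $\gamma$ exists and $\iota_*$ is injective.

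I expect the main obstacle to be the embedding step — ensuring the foliated branched cover simultaneously (a) turns the immersed disk into an embedded one, (b) keeps the foliation smooth and taut upstairs, and (c) does not kill the nontriviality of $\gamma$ in $\pi_1$ of the leaf — since the rest is the time-honored vanishing-cycle/Reeb-component dichotomy. The transversality/general-position normal form for $\cF\cap D$ and the innermost-circle induction are routine once the embedded disk is in hand.
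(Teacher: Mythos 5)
There are genuine gaps, and the central one is a misreading of what a \emph{foliated branched cover} is in this paper. Here it is a map $\pi:(M,\F)\to\CP^1$ whose restriction to each leaf is a branched cover (Definition \ref{def:LefschetzPencil}); its existence is \emph{equivalent} to tautness. It is not a branched covering space of $M$ over a link, so the machinery developed in the paper cannot be used to ``promote the immersion to an embedding.'' That embedding step is in any case both unjustified and unnecessary: a null-homotopic loop in a $3$-manifold need not bound an embedded disk at all (think of a knotted null-homotopic circle), passing to a genuine branched cover of $M$ would change $\pi_1(M)$ and $\pi_1(L)$ (the concern you flag under (c) but do not resolve), and the standard argument --- which the paper follows in Section \ref{sec:MainTheorem} --- works perfectly well with an \emph{immersed} capping disk, produced via Smale--Hirsch so as to be transverse to $\F$ near the boundary (Lemma \ref{lem:immersedcappingdisk}).

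The second gap is that your route defers the hard work back to Novikov. The implication ``vanishing cycle $\Rightarrow$ Reeb component'' (your ``maximality/limit argument extends the family of compact leaves until it limits onto a torus with one-sided trivial holonomy'') is precisely the delicate core of Novikov's original proof; sketching it in two sentences does not constitute an elementary proof, and the entire point of the paper is to avoid it. The paper instead shows directly that a foliation admitting a foliated branched cover has \emph{no vanishing cycles} (Proposition \ref{prop:VanishingCycles}): the branched cover rigidifies a shrinking family of leafwise disks $g_t$ enough that Ascoli's theorem produces a $C^1$-limit disk in the limiting leaf, so the would-be vanishing cycle bounds and no Reeb component ever needs to be produced or excluded. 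Your final step (tautness excludes Reeb components) is standard and fine, but it is the only part of your outline that is both correct and fully in hand; the two steps feeding into it are, respectively, based on a misidentification of the paper's main tool and an appeal to the very theorem being reproved.
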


Novikov's original proof \cite{Novikov}, although elementary, is rather delicate. The most recent proofs rely on the theory of minimal surfaces \cite{Rosenberg,Sullivan1,Sullivan2,Hass}, see \cite{CalegariBook} for an excellent exposition.

In this note we reprove this result using another characterization of taut foliations. In short, a foliated branched cover is a smooth map $\pi:(M,\F) \to \CP^1$ whose restriction to any leaf of $\F$ defines a branched cover, see Definition \ref{def:LefschetzPencil} for a more detailed description. 
The fibers of $\pi$ are closed transversals to $\F$. Hence any foliation admitting a branched cover is taut in the sense of Definition \ref{def:TautFoliation}.
The converse can be proven in several ways. An analytical proof is given in \cite[Corollary 1.3]{IbortTorres} while the main theorem of \cite{Calegari} provides a combinatorial proof.

\begin{theorem*}[\cite{Calegari}]
A foliation $\cF$ on a closed $3$-manifold is taut if and only if it admits a foliated branched cover. 
\end{theorem*}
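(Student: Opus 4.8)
One direction was essentially noted above: if $\pi\colon(M,\F)\to\CP^1$ is a foliated branched cover and $p\in\CP^1$ is a regular value of $\pi$, then $\pi^{-1}(p)$ is a closed $1$-submanifold of $M$, it is transverse to $\F$ (at each of its points $d\pi$ restricts to an isomorphism $T\F\to T_p\CP^1$, because a branch point of a leafwise map $\pi|_L$ is a critical point of $\pi$), and it meets every leaf (each $\pi|_L$ being onto). Hence condition~(i) of Definition~\ref{def:TautFoliation} holds and $\F$ is taut.

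For the converse I would run a foliated version of Donaldson's construction of symplectic Lefschetz pencils, as in the analytic proof of \cite{IbortTorres}. Starting from characterization~(iii), fix a closed $\omega\in\Omega^2(M)$ with $\omega|_\F$ nowhere degenerate. Non-degeneracy being an open condition, perturb $\omega$ so that $[\omega]\in H^2(M;\QQ)$ and then rescale so that $[\omega]\in H^2(M;\ZZ)$. Fix a leafwise Riemannian metric; since $M$ is orientable and $\F$ coorientable the leaves are oriented surfaces, so $\omega|_\F$ and the metric determine a leafwise almost complex structure $J$ on $T\F$. Let $(\cL,\nabla)$ be a Hermitian line bundle over $M$ with a unitary connection of curvature $-2\pi i\,\omega$; restricted to each leaf it is a positive line bundle. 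The plan is to produce, for $k\gg 0$, sections of $\cL^{\otimes k}$ that are \emph{approximately $J$-holomorphic} and \emph{uniformly transverse to zero} along the leaves, with all estimates uniform over $M$.

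Concretely, I would build a pair $(s^k_0,s^k_1)$ of such sections which is, in addition, jointly generic in Donaldson's sense. Because every leaf is a surface, the zero locus of one generic section is a discrete set of points, and for dimension reasons the common zero locus $\{s^k_0=s^k_1=0\}$ is empty both on each leaf and in $M$; so $\pi_k:=[\,s^k_0:s^k_1\,]\colon M\to\CP^1$ is a globally defined smooth map. Approximate holomorphicity together with uniform transversality of the leafwise $1$-jet of the pair forces each restriction $\pi_k|_L$ to have only isolated non-degenerate critical points, with leafwise local model $z\mapsto z^2$; away from them $\pi_k|_L$ is a submersion of surfaces onto $\CP^1$, hence a branched cover. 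Thus $\pi_k$ is a foliated branched cover in the sense of Definition~\ref{def:LefschetzPencil}, and $\F$ admits one.

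The hard part will be transferring Donaldson's machinery to the foliated setting: one must show that the approximately holomorphic, leafwise transverse sections can be chosen with estimates uniform as the base leaf varies, which requires controlling the geometry of the leaves of $\F$ (curvature, injectivity radius) and carrying out the local-to-global ``estimated transversality'' argument so that the transversality constant does not deteriorate in the transverse direction across $M$; one must also verify that uniform transversality of the leafwise $1$-jet really does exclude worse leafwise singularities, so that the restrictions $\pi_k|_L$ are genuine branched covers. I note that \cite{Calegari} instead obtains the foliated branched cover by a purely combinatorial argument, starting from a triangulation of $M$ adapted to $\F$; either route provides what is needed in the sequel.
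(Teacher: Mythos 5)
The paper does not actually prove this statement: it records the forward implication in one sentence (the fibers of $\pi$ over regular values are closed transversals meeting every leaf, so condition (ii) of Definition \ref{def:TautFoliation} holds) and refers the converse entirely to the literature, namely to the analytic argument of \cite{IbortTorres} and the combinatorial one of \cite{Calegari}. Your treatment of the forward direction is correct and coincides with the paper's; the only point worth making explicit is why regular values exist (the critical set $C$ is a compact $1$-submanifold, and away from $C$ the leafwise differential already surjects onto $T\CP^1$, so $\pi(C)$ has measure zero and Sard applies) and why every leaf meets the fiber (a branched covering of the connected surface $\CP^1$ is surjective).

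For the converse, what you offer is a roadmap for the Ibort--Mart\'inez-Torres route rather than a proof: the integrality of $[\omega]$, the leafwise almost complex structure, the line bundle $\cL^{\otimes k}$, and the pencil $[s^k_0:s^k_1]$ are the right ingredients, and you correctly identify the two places where the real work lies --- uniform estimated transversality as the leaf varies across $M$ (leaves may be non-compact and have unbounded geometry a priori, so the uniformity is genuinely delicate), and the verification that transversality of the leafwise $1$-jet forces the quadratic local model of Definition \ref{def:LefschetzPencil} rather than worse tangencies. But these steps are named, not carried out, so as a self-contained argument the converse has a gap. Relative to the paper this is not a defect, since the paper likewise delegates the converse to \cite{IbortTorres} and \cite{Calegari}; if you intend your sketch to stand in for a proof, however, you should either import the foliated Donaldson estimates with precise references or switch to Calegari's combinatorial construction, which avoids the analytic uniformity issues entirely by building the branched cover from a triangulation adapted to $\F$.
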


In light of this characterisation, Novikov's theorem is a consequence of the following, which is our main result:

\begin{maintheorem*}\label{thm:maintheorem}
Let $\F$ be a foliation on a closed, orientable $3$-manifold $M$ which admits a foliated branched cover. Then, for any leaf $L$ the inclusion $\iota:L \to M$ induces an injection
\[ \iota_*:\pi_1(L) \to \pi_1(M).\]
\end{maintheorem*}

The proof consists of two steps. First, we show that a foliation admitting a foliated branched cover does not have any vanishing cycles, see Proposition \ref{prop:VanishingCycles}. 
We then follow a standard argument to show that for such foliations the fundamental group of a leaf injects in the fundamental group of the ambient space, see Section \ref{sec:MainTheorem}.

\subsection*{Organization of the paper}
The first four sections contain the required preliminaries. In Section \ref{sec:Ascoli} we recall Ascoli's theorem. Section \ref{sec:LefschetzPencil} contains the basics on foliated branched covers, including Proposition \ref{prop:VanishingCycles} which is the main ingredient for the proof of Theorem \ref{thm:maintheorem}. Section \ref{sec:MorseMaps} contains the definition of Morse maps into foliations and some properties of their characteristic foliations. 
The proof of the main theorem, is then given in Section \ref{sec:MainTheorem}.

\subsection*{Acknowledgements}

We are deeply indebted to Fran Presas for suggesting the strategy of the proof, his insight and ideas, and for many useful discussions. We are grateful to Álvaro del Pino for his comments on a preliminary draft. We also wish to thank Ga\"{e}l Meigniez for useful discussions. \\
The first author is supported by grant BES-2017-081980, project SEV-2015-0554-17-2, of the Spanish Ministry for Science and Innovation.
The second author is funded by the Dutch Research Council (NWO) on the project ``proper Fredholm homotopy theory'' (with project number OCENW.M20.195) of the research programme Open Competition ENW M20-3.

\section{Ascoli's theorem}\label{sec:Ascoli}

We recall here the well-known Ascoli's theorem. 
Its main use will be to show that foliated branched covers do not admit vanishing cycles, see Proposition \ref{prop:VanishingCycles}.

Let $X$ be a topological space and $(Y,d)$ be a metric space, and $S \subset C(X,Y)$ a subset of the space of continuous functions. Then, $S$ is said to be:
\begin{itemize}
    \item \textbf{equicontinuous} at $x_0 \in X$ if, given $\varepsilon > 0$ there exists a neighborhood $U$ of $x_0$ such that
    \[ d(f(x),f(x_0)) < \varepsilon,\]
    for all $f \in S$ and $x \in U$. We say $S$ is equicontinuous if it is equicontinuous at every $x \in X$.
    \item \textbf{pointwise precompact} if for each $x \in X$ the set
    \[ S_x := \{ f(x) \mid f \in S\} \subset Y,\]
    has compact closure.
\end{itemize}

\begin{theorem}[Ascoli's theorem]\label{thm:Ascoli}
Let $X$ be a topological space, $(Y,d)$ a metric space, and consider $C(X,Y)$ endowed with the compact open topology.
If a subset $S \subset C(X,Y)$ is equicontinuous and pointwise precompact, then the closure of $S$ is compact.
\end{theorem}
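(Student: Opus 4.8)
The plan is to realize $C(X,Y)$ inside the product space $Y^X=\prod_{x\in X}Y$ via the evaluation map $\Phi\colon f\mapsto(f(x))_{x\in X}$, transport the compactness question to that product (where Tychonoff's theorem is available), and then use equicontinuity to carry the conclusion back. Write $\overline{S}$ for the closure of $S$ in $C(X,Y)$ with the compact-open topology, and let $E$ be the closure of $\Phi(S)$ in $Y^X$ with the product topology (equivalently, the topology of pointwise convergence). Since $S$ is pointwise precompact, $\Phi(S)\subseteq\prod_{x\in X}\overline{S_x}$, and this product is closed in $Y^X$ and compact by Tychonoff's theorem; hence $E$ is a closed subset of a compact space, so $E$ is compact in the product topology.

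The first substantive point is that every element of $E$ is continuous and that $E$, viewed as a family of functions, is again equicontinuous. This is the standard $\varepsilon/3$ argument: given $g\in E$, a point $x_0\in X$, and $\varepsilon>0$, pick a neighborhood $U$ of $x_0$ witnessing equicontinuity of $S$ at $x_0$ for $\varepsilon/3$; for any $x\in U$ the basic product-neighborhood of $g$ cut out by the coordinates $x$ and $x_0$ meets $\Phi(S)$, so there is $f\in S$ with $d(g(x),f(x))<\varepsilon/3$ and $d(g(x_0),f(x_0))<\varepsilon/3$, and the triangle inequality yields $d(g(x),g(x_0))<\varepsilon$. Thus $E\subseteq\Phi(C(X,Y))$ and $E$ is equicontinuous.

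The key step is to show that on the equicontinuous set $E$ the product topology and the compact-open topology coincide. The inclusion ``product $\subseteq$ compact-open'' is automatic; for the converse, fix $g\in E$, a compact set $K\subseteq X$, and $\varepsilon>0$. By equicontinuity of $E$, each $x\in K$ has an open neighborhood $U_x$ with $d(h(y),h(x))<\varepsilon/3$ for all $h\in E$ and all $y\in U_x$; extract a finite subcover $U_{x_1},\dots,U_{x_n}$ of $K$. Then the product-open set $V=\{h\in E : d(h(x_i),g(x_i))<\varepsilon/3\text{ for }i=1,\dots,n\}$ is contained in $\{h\in E : \sup_{y\in K}d(h(y),g(y))<\varepsilon\}$, by a three-term estimate. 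Hence the two subspace topologies on $E$ agree, and since $E$ is compact in the product topology it is compact in the compact-open topology.

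Finally, since the compact-open topology is finer than the topology of pointwise convergence, $\overline{S}$ is contained in $E$; being closed in $C(X,Y)$ for the compact-open topology, it is closed in $E$, which is compact for that same topology. Therefore $\overline{S}$ is compact. The only external ingredient is Tychonoff's theorem; the rest reduces to two routine $\varepsilon/3$ estimates, and the one place that requires genuine care is verifying that the compact-open and pointwise topologies really do coincide on an equicontinuous family.
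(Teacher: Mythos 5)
Your proof is correct: it is the standard Tychonoff-based argument (embed in $Y^X$, show the pointwise closure is equicontinuous and consists of continuous maps, and check that the compact-open and pointwise topologies agree on an equicontinuous family), which is exactly the proof in Munkres that the paper cites in lieu of giving its own. The only step you compress slightly is the identification of the compact-open topology with the topology of uniform convergence on compacta for a metric target, but that is routine and your $\varepsilon/3$ estimates supply the substantive half of it.
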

For a proof of the theorem we refer the reader to \cite{Munkres}.
The following observations and examples will be relevant in the next section:
\begin{itemize}
    \item If $Y$ is compact, then any subset $S \subset C(X,Y)$ is pointwise precompact.
    \item If $Y$ is a compact manifold then any two Riemannian metrics are equivalent. Hence, in this case equicontinuity of a subset $S \subset C(X,Y)$ can be checked with respect to any Riemannian metric on $Y$.
    
    \item Given $S\subset C^\infty(X,Y)$ we denote by 
    \[ J^kS := \{j^k f\mid f \in S\} \subset \Gamma(J^k(X,Y)),\]
    the family of $k$-th order jets of maps in $S$. Suppose that $J^kS$ is equicontinuous and pointwise precompact (with respect to some metric $d$ on $J^k(X,Y)$). Then, by Theorem \ref{thm:Ascoli}, any sequence in $J^kS$ contains a subsequence converging (in $C^0$-norm) to a holonomic section (recall that the space of holonomic sections is closed with respect to the $C^0$-norm, which follows for example from \cite[Theorem 7.17]{Rudin}). 
    Therefore, any sequence in $S$ contains a subsequence which converges in $C^k$-norm.
\end{itemize}
   
By a foliated bundle we mean a fiber bundle $\pi:Y \to B$ endowed with a flat connection, or equivalently, a foliation $\F$ on $Y$ whose leaves are transverse to the fibers of $\pi$. If $Y$ and $B$ have non-empty boundary we additionally require that $\partial Y = \pi^{-1}(\partial B)$. This implies that $\pi$ restricts to a foliated bundle $\pi_\partial:(\partial Y, \F|_{\partial Y}) \to \partial B$.
   
\begin{lemma}\label{lem:EquicontinuityFoliatedBundle}
    Consider a compact Riemannian manifold $(Y,g)$, and $\pi:(Y,\F) \to B$ a foliated bundle. 
    Let $g_s:X \to Y$, $s \in (0,1],$ be a family of leafwise maps all lifting the same map $f:X \to B$. Then, the $g_s$ converge in $C^\infty$-norm to a leafwise map $g_0$ lifting $f$.
    \end{lemma}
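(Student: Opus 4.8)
The plan is to apply Ascoli's theorem in the jet form recalled above to the family $\{g_s\}_{s\in(0,1]}$, once one observes that a leafwise lift of $f$ is \emph{locally rigid}: read in a foliation chart it is nothing but $f$ followed by the inclusion of a (locally constant) slice, so all of its derivatives agree with those of $f$ and are controlled uniformly in $s$.

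First I would fix a local normal form. Since $Y$ is compact, cover it by finitely many foliation charts $\psi_i$ identifying $\pi^{-1}(U_i)$ with $U_i\times F_i$ so that $\F$ becomes the horizontal foliation and $\pi$ the first projection, and fix on each $U_i\times F_i$ a metric of bounded geometry equivalent to $g$. On each connected component $V$ of $f^{-1}(U_i)$ write $\psi_i\circ(g_s|_V)=(f|_V,h_{s,V})$; since $g_s$ maps into the leaves of $\F$ and $V$ is connected, $g_s(V)$ lies in a single leaf, which meets the chart in horizontal slices $U_i\times\{\mathrm{pt}\}$, so $h_{s,V}\equiv c_{s,V}\in F_i$ is a constant. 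Thus in these coordinates every partial derivative of $g_s$ of positive order equals the corresponding derivative of the fixed map $f$, while the $0$-jet takes values in the compact manifold $Y$.

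Then, fixing $k$, the jet family $\{j^kg_s\}$ is pointwise precompact — at each $x\in X$ the jet $j^kg_s(x)$ varies only through the transverse constant $c_{s,V}$, which lies in a fixed compact subset of $F_i$ — and equicontinuous, since equicontinuity is local and near any point one works in a single chart, where $j^kg_s$ differs from the fixed continuous section $j^kf$ only by that constant, uniformly in $s$ (the bounded geometry absorbing the dependence on the constant). Ascoli's theorem (Theorem~\ref{thm:Ascoli}), in the form recalled just after it, then gives that any sequence $g_{s_n}$ has a subsequence converging in $C^k$-norm; a diagonal argument over $k$ upgrades this to $C^\infty$-convergence to some $g_0:X\to Y$. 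This $g_0$ is smooth, being a $C^\infty$-limit, and passing to the limit in $\pi\circ g_{s_n}=f$ gives $\pi\circ g_0=f$; and on each component $V$ the constants $c_{s_n,V}$ converge (after a further subsequence) to some $c_V\in F_i$, so $\psi_i\circ(g_0|_V)=(f|_V,c_V)$ lands in one slice, hence in one leaf, and since the fixed transition data glue the slices of each $g_{s_n}$ compatibly — a closed condition — they glue for $g_0$ too, so $g_0(X)$ lies in a single leaf. Finally, for the convergence of the whole family one uses that $s\mapsto g_s$ is continuous: $X$ being connected, each $g_s$ is the unique leafwise lift of $f$ taking the value $g_s(x_0)$ at a fixed basepoint $x_0$, and the continuous arc $s\mapsto g_s(x_0)\in\pi^{-1}(f(x_0))$ together with the $C^\infty$-precompactness just established pins down a single limit $g_0=\lim_{s\to0}g_s$.

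The hard part is not the analysis but the identification of the limit as a \emph{leafwise} map, since leaves of $\F$ need not be closed subsets of $Y$: a priori a $C^0$-limit of points lying in leaves could fail to lie in any single leaf. This is exactly what the local rigidity of the normal form buys us — locally the transverse coordinate of $g_s$ is a single constant, and constants do converge, so the limiting transverse coordinate is again locally constant. Everything else is soft: the uniform $C^k$-bounds are immediate from the normal form, and Ascoli supplies the compactness.
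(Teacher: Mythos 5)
Your proposal takes essentially the same route as the paper's proof: pass to a foliated-bundle chart in which each lift reads as $(f,\mathrm{const})$, deduce equicontinuity and pointwise precompactness of the $k$-jets from the fact that only the transverse constant depends on $s$, and apply Ascoli's theorem. The paper's version is terser --- it does not spell out the diagonal argument over $k$, the identification of the limit as a leafwise lift, or the passage from subsequential to full convergence, all of which you address more explicitly --- but the underlying argument is the same.
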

    \begin{proof}
        We show that, for any $k \geq 0$, the family $j^kg_s$, $s \in (0,1]$ satisfies the hypothesis of Theorem \ref{thm:Ascoli}.
        Since equicontinuity is a local property it suffices to show it at $x \in X$. Suppose $\dim X = n$. There exist local coordinates $U \simeq \D^n \subset X$ around $x$, $V \simeq \S^1 \times \D^n$ around the fiber $\pi^{-1}(f(x))$, and $W \simeq \D^n \subset B$ around $f(x)$ such that
    \[ \F|_V = \bigcup_{t \in \S^1} \{t\} \times \D^n,\]
    and the restriction $\pi|_V$ equals
    \[ \pi:\S^1 \times \D^n\to \D^n,\quad (t,z) \mapsto z.\]
    We endow $\S^1 \times \D^n$ with the product metric of the standard metrics on $\D^n$ and $\S^1$.
    In these coordinates, each lift $g_s$ equals
    \[ g_s|_U:\D^n \to \S^1 \times \D^n,\quad z \mapsto (t_s,f(z)),\]
    for some $t_s \in \S^1$. From this description it is clear that (using the product metric) for any $k \geq 0$ the family $j^kg_s$, $s \in (0,1]$, is equicontinuous and precompact at $x$. 
    Indeed, the projection of $g_s|_U$ onto the $\D^n$ factor is independent of $s$, while the $t_s$ are contained in a compact set.
    Since $Y$ is compact any two metrics are equivalent, so the same is true with respect to the Riemannian metric. Applying Theorem \ref{thm:Ascoli} we obtain the desired limit.
    \end{proof}

\section{Foliated branched covers}\label{sec:LefschetzPencil}

\begin{definition}\label{def:LefschetzPencil}
A \textbf{foliated branched cover} on a closed, orientable, foliated $3$-manifold is a smooth map
\[ \pi:(M,\F) \to \CP^1,\]
such that $C := \Crit(\pi)\subset M$ is a closed, not necessarily connected, $1$-dimensional submanifold, satisfying the following local model: around each $x \in C$ there exist coordinates $(z,t) \in \D^2 \times \R \subset M$ and a holomorphic chart $\D^2 \subset \CP^1$ in which
\begin{itemize}
    \item $\F = \bigcup_{t \in \R} \D^2 \times \{t\}$;
    \item $\pi(z,t) = \gamma(t) + z^2$, and where $\gamma: \R \to \D^2$ is a curve satisfying $\gamma(0) = \pi(x)$.
\end{itemize}
\end{definition}
In other words, a smooth map $\pi:(M,\F) \to \CP^1$ which restricts to each leaf as a branched cover is a foliated branched cover. 
The existence of a foliated branched cover imposes strong restrictions on the topology of the foliation. In particular, $\F$ does not have any vanishing cycles:
\begin{proposition}\label{prop:VanishingCycles}
Let $\pi:(M,\F) \to \CP^1$ be a foliated branched cover. Suppose we are given a family of immersions of the disk $g_t:D \to (M,\F)$, $t \in (0,1]$, such that:
\begin{itemize}
    \item the image of $g_t$ is contained in a leaf $L_t$ of $\F$;
    \item for every $x \in D$, the curve $g_t(x)$, $t \in (0,1]$, is transverse to $\F$;
    \item as $t$ goes to zero, the restrictions $g_t|_{\partial D} : \partial D \to M$ converge (in the  $C^0$-topology) to an immersion $g_\partial:\partial D \to M$, whose image is contained in a leaf $L$.
\end{itemize}
Then, there exists a map $g:D\to L$ extending $g_\partial$.
\end{proposition}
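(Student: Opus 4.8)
The plan is to obtain $g$ as a subsequential $C^\infty$-limit of the maps $g_t$, and to use $\pi$ both to control the geometry of these maps and to force the limit to lie in $L$. The point is that once some sequence $g_{t_n}$, $t_n\to0$, converges in $C^1$ on the interior of $D$ and in $C^0$ up to $\partial D$, the limit $g_0\colon D\to M$ is tangent to $\F$ — being a $C^1$-limit of maps tangent to $\F$ — so on each plaque it is constant in the transverse direction, hence its image lies in a single leaf; since $g_0|_{\partial D}=\lim g_{t_n}|_{\partial D}=g_\partial$ maps into $L$, that leaf is $L$, and $g:=g_0$ is the required extension. So the whole problem is the compactness of the family $\{g_t\}_{t\in(0,1]}$, which I will derive from Ascoli's theorem (Theorem~\ref{thm:Ascoli}) exactly as in Lemma~\ref{lem:EquicontinuityFoliatedBundle}, using two inputs coming from the branched cover $\pi$: a uniform area bound, and the absence of geometric degeneration.

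First I would pass to $\CP^1$. Set $f_t:=\pi\circ g_t\colon D\to\CP^1$. Since $g_t$ is an immersion into $L_t$ and $\pi|_{L_t}$ is a branched cover, $f_t$ is a branched immersion of the disk; orienting the leaves by the coorientation of $\F$ and $D$ accordingly (consistently in $t$, by continuity), the form $f_t^*\omega_{\mathrm{FS}}=g_t^*(\pi^*\omega_{\mathrm{FS}})$ is $\geq0$, where $\omega_{\mathrm{FS}}$ is the Fubini--Study form, so $\int_D f_t^*\omega_{\mathrm{FS}}$ is the geometric area of $f_t$. As $\pi^*\omega_{\mathrm{FS}}$ is closed, applying Stokes' theorem on $D\times[t,1]$ to $G^*(\pi^*\omega_{\mathrm{FS}})$, with $G(x,s):=g_s(x)$, gives
\[
  \int_D f_t^*\omega_{\mathrm{FS}} \;=\; \int_D f_1^*\omega_{\mathrm{FS}} \;+\; \int_{\partial D\times[t,1]} G^*(\pi^*\omega_{\mathrm{FS}}).
\]
The last term is the integral of $\pi^*\omega_{\mathrm{FS}}$ over the surface swept out by the boundary loops $g_s|_{\partial D}$; since these converge as $s\to0$ and the tracks $s\mapsto g_s(x)$ are transverse to $\F$, this swept surface has finite area, so the term stays bounded as $t\to0$. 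Hence the areas of the $f_t$, and therefore those of the immersed disks $g_t(D)$ in the leaves $L_t$, are bounded uniformly in $t$.

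Next I would run the compactness argument. The branched cover also controls the shape of the leaves: as $\CP^1$ is simply connected and satisfies an isoperimetric inequality, and the local model of $\pi$ near $\Crit(\pi)$ is $w\mapsto w^2$, every sufficiently short loop in any leaf $L$ projects to a short nullhomotopic loop in $\CP^1$ bounding a small disk, the component through the loop of whose preimage is a small (possibly branched) disk in $L$ bounding the loop; thus the leaves of $\F$ obey a \emph{uniform} isoperimetric inequality, independent of the leaf. Together with the uniform area bound, the compactness of $M$ (bounding the second fundamental forms of the leaves), and the fact that the fibres of $\pi$ are circles, this lets one verify, after the usual normalisation of parametrisations rel $\partial D$, that the family $\{g_t\}$ is equicontinuous and pointwise precompact — the hypotheses of Ascoli's theorem in the form used in Lemma~\ref{lem:EquicontinuityFoliatedBundle} (the critical locus being treated by its local model). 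We thus extract $t_n\to0$ with $g_{t_n}\to g_0$ in $C^\infty$ on the interior of $D$ and in $C^0$ up to $\partial D$. By the first paragraph, $g:=g_0\colon D\to L$ extends $g_\partial$, as required.

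The hard part will be this compactness step, i.e. ruling out degeneration of the disks $g_t$ in the limit. This is precisely where the hypothesis is used: for a foliation with a vanishing cycle, such as the Reeb foliation, the disks bounding the cycle necessarily develop thin necks and no limiting disk exists, and it is the branched cover $\pi$ that excludes this — by transporting the isoperimetric control on $\CP^1$ to all leaves simultaneously, and by providing, through its circle fibres, the compact fibre directions along which Ascoli is applied as in Lemma~\ref{lem:EquicontinuityFoliatedBundle}. A subsidiary, purely technical, point is the regularity of the swept surface needed for the Stokes computation, which is supplied by the transversality of the tracks together with the $C^0$-convergence of the boundary loops.
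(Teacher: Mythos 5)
Your high-level goal is right (extract a $C^1$-limit of the $g_t$ via Ascoli and observe that a $C^1$-limit of leafwise maps lands in a single leaf, necessarily $L$), but the route to the Ascoli hypotheses has a genuine gap, and the key idea of the paper's proof is missing. The decisive sentence --- ``together with the uniform area bound \dots this lets one verify \dots that the family $\{g_t\}$ is equicontinuous and pointwise precompact'' --- is essentially the whole content of the proposition, and none of the listed ingredients implies it. A uniform area bound on a family of immersed disks does not prevent gradient blow-up or degeneration of the parametrization (this is exactly the bubbling phenomenon that makes Gromov/Sacks--Uhlenbeck compactness genuinely hard), and ``the usual normalisation of parametrisations rel $\partial D$'' is not available: the $g_t$ are given, and any reparametrization would have to be constructed. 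The ``uniform isoperimetric inequality for all leaves'' is also unjustified: the component of $(\pi|_L)^{-1}$ of a small disk that contains a given short loop need not be bounded by that loop (its boundary may cover the projected loop several times, or have several components), and in any case an isoperimetric inequality for \emph{short} loops says nothing about the fixed-size loops $g_t(\partial D)$ that must be filled. Even the preliminary area bound is not established: bounding $\int_{\partial D\times[t,1]}G^*(\pi^*\omega_{\mathrm{FS}})$ requires controlling the area of the swept cylinder, and transversality of the tracks $s\mapsto g_s(x)$ to $\F$ together with $C^0$-convergence of the boundary loops does not bound the lengths of these tracks (a transversal can spiral with unbounded length while converging in $C^0$).

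The paper closes precisely this gap by a different mechanism, which is the real content of the proof: it first normalizes the family --- by leafwise isotopies, a perturbation of $\pi$ near its critical curves, and the flow of a vector field tangent to the fibres of $\pi\circ G$ --- so that $\pi\circ g_t$ is \emph{independent of $t$} and the critical points of $\pi\circ g_t$ sit at fixed points $p_1,\dots,p_k\in D$. After this, away from the $p_i$ the $g_t$ are all lifts of one fixed map $D\to\CP^1$ through the foliated circle bundle given by $\pi$ off its critical locus, so in local coordinates $g_t(z)=(t_s,f(z))$ and equicontinuity of all jets is immediate (Lemma~\ref{lem:EquicontinuityFoliatedBundle}); at the $p_i$ the local model $(z,t)\mapsto z^2+\gamma(t)$ gives $g_t(z)=(g_1(z),t)$ and the same conclusion. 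No area bound, isoperimetric inequality, or curvature estimate is needed. To salvage your approach you would need either this normalization or a genuine bubbling/compactness analysis; as written, the middle of your argument does not follow from its premises.
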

\begin{proof}

We start by proving the proposition under some extra assumptions, afterwards we will show how to reduce to this special case. So, let us assume for the moment that there exist $p_1,\dots,p_k \in D$ such that for all $t \in (0,1]$:
\begin{itemize}
    \item $\pi \circ g_t = \pi \circ g_1$;
    \item the critical points of $\pi \circ g_t$ are $p_1,\dots,p_k$.
\end{itemize}

For each $1 \leq i \leq k$ we define a curve
\[ \lambda_i:(0,1] \to M,\quad t \mapsto g_t(p_i),\]
into the critical locus of $\pi$.

By compactness there exists a sequence $t_n \to 0$ such that the limit $q_i := \lim_{n \to \infty} \lambda_i(t_n)$ exists.
We fix local coordinates $U_i \simeq \D^2 \subset D$ around $p_i$, $V_i \simeq \D^2 \times [-1,1] \subset M$ around $q_i$, and $W_i \simeq \D^2 \subset \CP^1$ around $f(q_i)$ in which the foliation equals
\[ \F = \bigcup_{t \in [-1,1]} \D^2 \times \{t\}.\]
The second hypothesis implies that in these coordinates the curve equals
\[ \lambda_i:(0,1] \to V_i,\quad t \mapsto (0,a(t)),\]
with $\dot{a}(t) > 0$ for all positive $t$. Moreover, after reparametrizing $t$ we may assume $a(t) = t$.
The foliated branched cover is given by
\[ \pi|_{V_i}:V_i \to W_i,\quad (z,t) \mapsto z^2 + \gamma_i(t),\]
as in Definition \ref{def:LefschetzPencil}.
In particular, this local model implies that $\lim_{t\to0}\lambda_i(t)$ exists and is equal to $q_i$.
Note that our additional assumptions above imply that $\gamma_i(t) = \gamma_i(1)$ for $t \in [0,1]$.
It follows that in these coordinates the family $g_t$ equals:
\[ g_t|_{U_i}:U_i \to V_i,\quad z \mapsto (g_1(z),t).\]
{We endow each $V_i \simeq \D^2 \times [-1,1]$ with a product Riemannian metric and extend them to a Riemannian metric $g$ on the whole of $M$.} The first coordinate of the above expression is independent of $t$. As such it is clear that the family of jets $j^1g_t$, $t \in (0,1]$ is equicontinuous and precompact at $p_i$.

The complement $D \setminus \{p_1,\dots,p_k\}$ is mapped by each $g_t$ into the complement of the critical locus of $f$. Thus, we are in the setup of Lemma \ref{lem:EquicontinuityFoliatedBundle}, and we conclude that $g_t$ and $\d g_t$ are equicontinuous and precompact at every point in $D$. Applying Theorem \ref{thm:Ascoli} we then obtain a $C^1$-limit $g := \lim_{t \to 0} g_t$. By uniqueness of the limit $g$ extends $g_\partial$, and its image is contained in a single leaf since $g$ is a $C^1$-limit of leafwise maps.

It remains to show we can reduce to the special case above.
Let us start by observing that if $\phi_s$ is a leafwise isotopy of $(M,\F)$, then proving the proposition for $g_t$ and $g_\partial$ is equivalent to proving it for $
\phi_1\circ g_t$ and $\phi_1 \circ g_\partial$.
This implies we can assume that the image of $g_\partial$ does not contain any critical points of $\pi$. Indeed, there is a ($C^\infty$-small) isotopy making the image of $g_\partial$ disjoint from the critical points of $\pi$. We will slightly abuse notation and keep denoting the isotoped family by $g_t$ and $g_\partial$.

Next, since we are interested in the limit $\lim_{t\to 0} g_t$, we can restrict the family to $t \in (0,\varepsilon]$ for $\varepsilon> 0$ arbitrarily small. 
Since the image of $g_\partial$ does not contain any critical points of $\pi$ the same is true for $g_t|_{\partial D}$ for $t \in (0,\varepsilon]$. 
Therefore, after restricting the family to $t \in (0,\varepsilon]$, there exists a leafwise isotopy taking $g_t$ to a family for which 
\begin{equation}\label{eq:BoundaryInvariance}
    \pi \circ g_t|_{\partial D} = \pi \circ g_\partial.
\end{equation}
Recall from Definition \ref{def:LefschetzPencil} that the critical points of $\pi$ lie on compact curves transverse to $\F$ and which do not intersect the image of $g_t|_{\partial D}$. 
The family $g_t$ defines a submersion $G:(0,\varepsilon] \times D \to (M,\F)$, such that 
\[ G^*\F = \bigcup_{t \in (0,\varepsilon]} \{t\} \times D.\]
Since the critical curves are transverse to $\F$ their preimages are transverse to the levels $\{t\} \times D$ and do not intersect the boundary.
It follows that the number of critical points $k$ of $\pi \circ g_t$ is independent of $t$, and finite by compactness of $D$.

Hence, after reparametrizing $D$ (for each $t$), we can assume that the critical locus of $\pi \circ g_t$ equals $\{p_1,\dots,p_k\} \subset D$ for all $t$. As before this allows us to define curves
\[ \lambda_i:(0,\varepsilon] \to M,\quad t \mapsto g_t(p_i),\]
into the critical locus of $\pi$, whose limit we denote by $q_i := \lim_{t\to 0} \lambda_i \in M$.
For each $1\leq i \leq k$, we fix local coordinates  in $U_i \simeq \D^2 \subset D$ around $p_i$, $V_i \simeq \D^2 \times [-1,1] \subset M$ around $q_i$, and $W_i \simeq \D^2 \subset \CP^1$ around $\pi(q_i)$ in which the foliation equals
\[ \F = \bigcup_{t \in [-1,1]} \D^2 \times \{t\},\]
and the foliated branched cover is given by:
\[ \pi|_{V_i}:V_i \to W_i,\quad (z,t) \mapsto z^2 + \gamma_i(t).\]
Let $\phi_{i,t}:W_i \to W_i$, $t \in [-1,1]$ be a smooth family of compactly supported diffeomorphisms  satisfying $\phi_{i,t} = \id$ for $t \in \Op(\pm 1)$ and $\phi_{i,t}(\gamma_i(t)) = \gamma_i(\varepsilon)$,
for $t \in \Op([0,\varepsilon])$. We perturb $\pi$ on $V_i$ by setting
\[ \wtd{\pi}|_{V_i}:V_i \to W_i,\quad (z,t) \mapsto (\phi_{i,t}\circ \pi)(z,t).\]
By construction we have that $\wtd{\pi}$ agrees with $\pi$ on the complement of the $V_i$, while on $\wtd{V}_i := \D^2 \times [0,\varepsilon] \subset V_i$ we have:
\begin{equation}\label{eq:tInvariance}
    \wtd{\pi}|_{\wtd{V}_i}: \wtd{V}_i \to W_i,\quad (z,t) \mapsto z^2 + \gamma_i(\varepsilon).
\end{equation}
Again, abusing notation we continue to denote $\wtd{\pi}$ and $\wtd{V}_i$ by $\pi$ and $V_i$ respectively.
Restricting $g_t$ even further if necessary, we can assume that the intersections of the image of $g_t$ with the critical locus of $\pi$ is contained in the union of the $V_i$. Hence, by the above formula, we have 
\[ \pi \circ g_t|_{\{p_1,\dots,p_k\}} = \pi \circ g_\varepsilon|_{\{p_1,\dots,p_k\}}.\]

It remains to arrange that $\pi\circ g_t = \pi \circ g_\varepsilon$ on the rest of $D$. To this end consider the immersion
\[ G:D \times (0,\varepsilon] \to M,\quad (x,t) \mapsto g_t(x),\]
and the subset $B := \{(x,t) \in D \times (0,\varepsilon] \mid x \in \partial D \cup \{p_1,\dots,p_k\}\}$ of its domain.
The fibers of the map $\pi \circ G$ are transverse to the $D$-factor, and at points in $B$ are tangent to the $(0,\varepsilon]$-factor. Hence, there exists a vector field $X \in \mathfrak{X}(D \times (0,\varepsilon])$ tangent to the fibers of $\pi \circ G$ satisfying $\d t(X) = - 1$ and $X|_B = -\partial_t$.
We use its flow $\phi_t$ to define the maps
\[\wtd{g}_t:D \to M,\quad (x,t) \mapsto G(\phi_t(x,\varepsilon)),\quad t \in (0,\varepsilon].\]
By construction these maps satisfy $\pi \circ \wtd{g}_t = \pi \circ \wtd{g}_\varepsilon$ on the whole of $D$.
\end{proof}

\section{Morse maps into foliations.}\label{sec:MorseMaps}

In this section we recall the basic properties of Morse functions into codimension-one foliations, see for example \cite{Moerdijk}.
A smooth function $f:M \to \R$ is called Morse if all of its critical points are non-degenerate. It is well-known that Morse functions form an open and dense subset of $C^\infty(M)$\footnote{Given smooth manifolds $M$ and $N$, we consider $C^\infty(M,N)$ to be equipped with the (strong) $C^\infty$-topology. In this topology a sequence of functions $f_n$, $k \in \N$, converges to $f$ if the derivatives of any order converge uniformly.} see for example \cite{Milnor, Guillemin-Pollack}. 
\begin{lemma}\label{lem:MorseFunctionRelative}
Let $f \in C^\infty(M)$ be a smooth function which is Morse on a neighborhood of a (possibly empty) closed set $A \subset M$. Then there exists a Morse function $g \in C^\infty(M)$ arbitrarily close to $f$ and satisfying $g|_{A} = f|_{A}$.
\end{lemma}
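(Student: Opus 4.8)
The plan is to leave $f$ unchanged on $A$ and perturb it to a Morse function only away from $A$, exploiting that being Morse is an \emph{open} condition: a small enough perturbation cannot destroy non-degeneracy of critical points near $A$, where $f$ is already Morse by hypothesis.

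In detail, I would fix an open set $U \supseteq A$ on which $f$ is Morse and an open set $V$ with $A \subseteq V \subseteq \overline{V} \subseteq U$ and $\overline{V}$ compact (automatic here, as $M$ is closed), and choose a smooth cutoff $\rho \colon M \to [0,1]$ with $\rho \equiv 0$ on a neighborhood of $A$ and $\rho \equiv 1$ on $M \setminus V$. Since Morse functions are dense in $C^\infty(M)$, pick a Morse function $g_0$ that is $C^\infty$-close to $f$ and set
\[ g := (1-\rho)\,f + \rho\, g_0 \;=\; f + \rho\,(g_0 - f). \]
As $\rho$ vanishes near $A$, we have $g|_A = f|_A$; and since $\rho$ is a fixed smooth function, $\rho\,(g_0-f)$ is $C^\infty$-small whenever $g_0 - f$ is (Leibniz rule, using compactness of $M$), so $g$ is as $C^\infty$-close to $f$ as we like.

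It remains to check that $g$ is Morse, which I would do in two pieces. On the open set $M \setminus \overline{V}$ one has $\rho \equiv 1$, so $g = g_0$ there and $g$ is Morse. On the compact set $\overline{V} \subseteq U$ the function $f$ is Morse, and $g$ is $C^2$-close to $f$ on $\overline{V}$; since the property of being Morse on a compact set is stable under $C^2$-small perturbations (the section $\d f$ of $T^*M$ is transverse to the zero section over $\overline{V}$, and transversality along a compact set persists under small perturbations of the section, cf.\ \cite{Guillemin-Pollack}), the function $g$ is Morse on $\overline{V}$ as well. As $M = \overline{V} \cup (M \setminus \overline{V})$, this shows $g$ is Morse everywhere.

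I expect the only substantive ingredient to be this last openness statement — reformulating the hypothesis ``$f$ is Morse on $U$'' quantitatively so that it survives a $C^2$-small perturbation on the compact neighborhood $\overline{V}$ of $A$ — together with the elementary observation that multiplying the perturbation by the fixed cutoff $\rho$ does not amplify it. (If $M$ were not compact one phrases both closeness claims in the strong $C^\infty$-topology, choosing a tolerance function adapted to $\overline{V}$; no new idea is needed, and this case does not occur in our applications.)
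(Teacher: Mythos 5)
The paper states this lemma without proof, treating it as standard background, so there is no in-paper argument to compare against; your write-up supplies exactly the standard proof, and it is correct. The two ingredients you isolate --- density of Morse functions and the fact that being Morse over a compact set is a $C^2$-open condition --- are the right ones, and the cutoff interpolation $g = f + \rho\,(g_0-f)$ together with the two-piece check (equal to $g_0$ off $\overline{V}$, a $C^2$-small perturbation of the already-Morse $f$ on $\overline{V}$) is airtight. One small caveat: the lemma as stated allows a general manifold and only asks $A$ to be closed, and the paper actually invokes it on the open (hence possibly non-compact) sets $f^{-1}(U_i)$ in the proof of Lemma \ref{lem:FoliatedMorse}; your choice of a relatively compact $\overline{V}$ implicitly uses compactness of $A$, so the parenthetical remark that the non-compact case ``does not occur in our applications'' is slightly optimistic. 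The fix is the routine one you gesture at: work with a locally finite cover by compacta and the strong $C^\infty$-topology, with no new ideas needed.
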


Consider a cooriented codimension-one foliation $\F$ on $N$. Around any point $x \in N$ there exists an open neighborhood $U \subset N$ and a submersion $\phi:U \to \R$ such that
\[ \F = \ker \d \phi,\]
as cooriented distributions.
We refer to $(U,\phi)$ as a \textbf{submersion chart} for $\F$.
Suppose $f:M \to (N,\F)$ is a smooth map.
We say that a point $x \in M$ is a \textbf{singularity of tangency} of $f$ with respect to $\F$, if $\d f(T_xM)$ and $T_{f(x)}\F$ are not transverse as subspaces of $T_{f(x)}N$.
That is, $f$ has a singularity of tangency if and only if $\phi \circ f:f^{-1}(U) \to \R$ has a critical point for some (and hence any) submersion chart $(U,\phi)$ around $f(x)$.
A singularity of tangency $x$ of $f$ is \textbf{non-degenerate} if $\phi_U \circ f$ has a non-degenerate critical point at $x$.
In this case we define the \textbf{index} of $x$ to be the index of $\phi \circ f$.

\begin{definition}
A map $f:M \to (N,\F)$ is called \textbf{Morse} if all of its singularities of tangency are non-degenerate. If $M$ has non-empty boundary we require the singularities of $f$ to be contained in the interior.
\end{definition}

\begin{lemma}\label{lem:FoliatedMorse}
If $M$ is compact the following hold:
\begin{itemize}
\item The set of Morse maps from $M$ to $(N,\F)$ is open and dense in $C^\infty(M,N)$.
\item Suppose $f:M \to (N,\F)$ is a smooth map which is Morse on a neighborhood of a closed subset $A \subset M$. Then there exists a Morse map $g$ arbitrarily close to $f$ such that $g|_A = f|_A$.
\end{itemize}
\end{lemma}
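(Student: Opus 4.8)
The plan is to reduce the statement to the classical openness and relative density of real-valued Morse functions (Lemma \ref{lem:MorseFunctionRelative}) by working in submersion charts for $\F$. The basic observation is that the tangency condition is \emph{chart-independent}: locally on the overlap of two submersion charts $(U,\varphi)$, $(U',\varphi')$ one has $\varphi=\beta\circ\varphi'$ for a local diffeomorphism $\beta$ of intervals with $\beta'>0$ (this is where coorientability enters), so $\varphi\circ h$ and $\varphi'\circ h$ have the same critical points and, at such a point, Hessians differing by the nonzero scalar $\beta'$. Hence ``$h$ has only non-degenerate tangencies at the points mapping into $U$'' is precisely the statement ``$\varphi\circ h$ is Morse on $h^{-1}(U)$'', independently of the chart.

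\emph{Openness.} This is a compactness argument. If $g_n\to f$ in the strong $C^\infty$-topology with each $g_n$ having a degenerate tangency $x_n$, pass to a subsequence with $x_n\to x$; then $g_n(x_n)\to f(x)$, so $g_n(x_n)$ eventually lies in a fixed submersion chart $(U,\varphi)$ around $f(x)$, in which $\varphi\circ g_n$ has a degenerate critical point at $x_n$. But near $x$ the function $\varphi\circ f$ has at most an isolated non-degenerate critical point, so on a small ball about $x$ its gradient is bounded away from $0$ off $x$ while its Hessian at $x$ is invertible; $C^2$-closeness of $\varphi\circ g_n$ then forces any critical point of $\varphi\circ g_n$ near $x$ to be non-degenerate --- a contradiction. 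The same reasoning on a collar shows that having no tangency on $\partial M$ is also an open condition, so Morse maps remain open when $\partial M\neq\emptyset$.

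\emph{Density.} It suffices to prove the relative statement (the first bullet is the case $A=\emptyset$). When $\partial M\neq\emptyset$ one first applies a $C^\infty$-small perturbation supported away from $A$ --- adding a small generic linear function to $\varphi\circ f$ in boundary charts --- to make $f$ transverse to $\F$ along $\partial M$, a condition that, being open, all later perturbations preserve. Cover the compact set $f(M)$ by submersion charts $(U_1,\varphi_1),\dots,(U_m,\varphi_m)$ and choose nested open sets $W_j\subset\overline{W_j}\subset W_j'\subset\overline{W_j'}\subset U_j$ with $\bigcup_j W_j\supset f(M)$; then $\bigcup_j W_j'$ is a neighbourhood of $f(M)$, hence contains $g(M)$ for every $g$ sufficiently $C^0$-close to $f$. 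We build $f=g_0,g_1,\dots,g_m=:g$ with total $C^\infty$-error below a prescribed $\varepsilon$, so that $g_j$ agrees with $g_{j-1}$ near $A$ and near $\bigcup_{i<j}g_{j-1}^{-1}(\overline{W_i'})$, and every $x$ with $g_j(x)\in\overline{W_1'}\cup\dots\cup\overline{W_j'}$ is either a non-degenerate tangency of $g_j$ or not a tangency. For the step from $g_{j-1}$ to $g_j$: on $V:=g_{j-1}^{-1}(U_j)$ the function $\varphi_j\circ g_{j-1}$ is already Morse near the closed subset $B:=V\cap\big(A\cup\bigcup_{i<j}g_{j-1}^{-1}(\overline{W_i'})\big)$ --- near the $A$-part since $g_{j-1}$ is Morse near $A$, near the $W_i'$-parts by chart-independence and the inductive hypothesis --- so Lemma \ref{lem:MorseFunctionRelative} applied on $V$ gives a Morse function $h$ on $V$, $C^\infty$-close to $\varphi_j\circ g_{j-1}$ and equal to it near $B$. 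Fixing a vector field $Z$ on $U_j$ with $\d\varphi_j(Z)\equiv1$ and flow $\psi_s$, and a cutoff $\rho$ supported in $V$ with $\rho\equiv1$ near $g_{j-1}^{-1}(\overline{W_j'})$, set $g_j:=\psi_{\rho\cdot(h-\varphi_j\circ g_{j-1})}\circ g_{j-1}$ on $V$ and $g_j:=g_{j-1}$ off $\operatorname{supp}\rho$ (these agree where $\rho=0$). Since $\varphi_j\circ g_j=h$ where $\rho\equiv1$, all tangencies of $g_j$ over $\overline{W_j'}$ are non-degenerate; $g_j$ agrees with $g_{j-1}$ near $B$; and, the perturbation being small, the (local version of the) openness argument preserves the non-degeneracy achieved over $\overline{W_1'}\cup\dots\cup\overline{W_{j-1}'}$. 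As $\bigcup_j\overline{W_j'}\supset g(M)$, the final map $g$ is Morse, and $g|_A=f|_A$.

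\emph{The main obstacle} is the bookkeeping in the density step: curing a chart $U_j$ perturbs $\varphi_i\circ f$ over the overlaps, so one must simultaneously keep the earlier charts and a neighbourhood of $A$ frozen --- which is possible exactly because ``already Morse'' is chart-independent and hence survives being frozen --- and keep each step small enough that the open condition from the first part protects the progress made so far; lining up the nested neighbourhoods is the only genuinely fiddly point. Translating a perturbation of the composite $\varphi_j\circ f$ back to a perturbation of $f$ via the flow of the $\varphi_j$-gradient-like field $Z$, and the underlying real Morse theory, are routine.
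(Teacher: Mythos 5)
Your proof is correct and follows essentially the same strategy as the paper: cover the image by finitely many submersion charts and inductively apply Lemma \ref{lem:MorseFunctionRelative} to the composites $\phi_i\circ f$, using that non-degeneracy of tangencies is chart-independent. You supply details the paper leaves implicit (the openness argument, and realizing a perturbation of $\varphi_j\circ f$ as a perturbation of $f$ via a transverse flow), and the nested-neighbourhood bookkeeping you flag is indeed routine to complete.
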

\begin{proof}
Consider a smooth map $f:M \to (N,\F)$. Since $M$ is compact, its image can be covered by finitely many submersion charts $(U_i,\phi_i)$ for $\F$.
The proof follows from inductively applying Lemma \ref{lem:MorseFunctionRelative} to $\phi_i\circ f:f^{-1}(U_i) \to \R$.
\end{proof}

As for Morse functions, the singularities of $f:M \to (N,\F)$ are isolated and satisfy a local model:
Let $\F$ be the foliation by horizontal hyperplanes in $\R^{n+1}$. 
For $1 \leq k \leq n$, the embedding
\begin{equation}\label{eq:MorseLocalModel}
f_k: \R^n \to (\R^{n+1},\F),\quad (x_1,\dots,x_n) \mapsto \Big(x_1,\dots,x_n,\sum_{i=1}^k -x_i^2 + \sum_{i = k+1}^n x_i^2\Big),
\end{equation}
is Morse, and has a index $k$ singularity at the origin. Locally, around a singularity of index $k$, any Morse map is equivalent to this model.

\begin{lemma}\label{lem:EulerCharacteristic}
Let $M$ be compact and $f:M \to (N,\F)$ a Morse map. Then
\[ \chi(M) = \sum_{p \in \operatorname{Sing(f)}} (-1)^{\operatorname{Index}(f,p)}.\]
\end{lemma}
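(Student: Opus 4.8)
The plan is to reduce the statement to the classical Poincaré–Hopf theorem by constructing a vector field on $M$ whose zeros are exactly the singularities of tangency of $f$, with the index of each zero equal to $\pm 1$ according to the Morse index. First I would cover the image $f(M)$ by finitely many submersion charts $(U_i,\phi_i)$ for $\F$; on $f^{-1}(U_i)$ the composition $\phi_i\circ f$ is a smooth function whose critical points are precisely the singularities of tangency of $f$ lying there, and these are non-degenerate since $f$ is Morse. Away from the singularities, $f$ is transverse to $\F$, so $\nabla(\phi_i\circ f)$ is nonvanishing; the issue is that these locally defined gradient-like vector fields do not patch together globally because $\phi_i$ and $\phi_j$ differ by a (leafwise-constant, monotone) reparametrisation on overlaps, which can reverse orientation.

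To get around the patching problem I would instead work with the line field (rather than vector field) transverse to the foliation: the coorientation of $\F$ gives a well-defined, nowhere-zero section of the normal bundle, hence $f^*N\F$ is a trivial real line bundle on $M$, and $df$ composed with projection to this normal line gives a global section $\sigma$ of $\mathrm{Hom}(TM, f^*N\F)\cong T^*M$, i.e.\ a global $1$-form $\alpha$ on $M$. By construction $\alpha$ vanishes exactly at the singularities of tangency, and near such a point $p$ it agrees (up to a positive function) with $d(\phi_i\circ f)$, whose zero has index $(-1)^{\operatorname{Index}(f,p)}$ as a zero of a closed $1$-form / of the gradient of a Morse function. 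Dualising $\alpha$ with any Riemannian metric produces a vector field on $M$ with isolated zeros at the singularities of $f$, each of index $(-1)^{\operatorname{Index}(f,p)}$, so Poincaré–Hopf gives $\chi(M)=\sum_{p}(-1)^{\operatorname{Index}(f,p)}$.

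The steps in order: (1) recall that $\F$ is coorientable, so $N\F$ and hence $f^*N\F$ is trivial, fixing a trivialisation; (2) define the global $1$-form $\alpha := \pi_{N\F}\circ df$ using this trivialisation, and check it is smooth and vanishes exactly on $\operatorname{Sing}(f)$; (3) verify via the local model \eqref{eq:MorseLocalModel} that, in the submersion chart, $\alpha$ equals $d(\phi_i\circ f)$ times a positive smooth function, so a metric-dual vector field $X_\alpha$ has a nondegenerate zero at each $p\in\operatorname{Sing}(f)$ of Poincaré–Hopf index equal to the sign of $\det$ of the Hessian of $\phi_i\circ f$, namely $(-1)^{\operatorname{Index}(f,p)}$; (4) invoke Poincaré–Hopf, $\chi(M)=\sum_{X_\alpha(p)=0}\operatorname{ind}_p(X_\alpha)$, to conclude. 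Since $M$ is a closed $3$-manifold $\chi(M)=0$, but the argument works in all dimensions and that is worth stating.

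The main obstacle is step (3): one has to be careful that the index of a nondegenerate zero of the vector field obtained by dualising $d\psi$ with respect to a Riemannian metric is $\operatorname{sign}\det(\operatorname{Hess}\psi) = (-1)^{\operatorname{Index}(\psi)}$, independently of the metric and of the multiplication by a positive function — this is standard but deserves a sentence, e.g.\ by noting that rescaling by a positive function or changing the metric is a homotopy through vector fields with the same isolated zero, hence preserves the local index. A secondary point to handle cleanly is that the trivialisation of $f^*N\F$ is only well-defined up to a nowhere-zero function, but multiplying $\alpha$ by such a function does not move its zeros or change their indices, so the construction is canonical enough for the purpose.
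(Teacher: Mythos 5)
Your proposal is correct and follows essentially the same route as the paper: both reduce the statement to the Poincar\'e--Hopf theorem via a vector field that is locally a gradient of $\phi_i\circ f$ and whose zeros are the singularities of tangency, with local index $(-1)^{\operatorname{Index}(f,p)}$. The only difference is cosmetic: the paper patches the local gradients with a partition of unity (which works because the coorientation lets one choose all $\phi_i$ compatibly, so the local gradients differ by positive factors --- the orientation-reversal you worry about does not occur), whereas you package the same data as a global $1$-form via the trivialised normal bundle; either globalisation is fine. One correction: the $M$ in this lemma is the \emph{domain} of the Morse map, not the ambient $3$-manifold, so your closing remark that $\chi(M)=0$ is off --- the lemma is applied with domain $\D^2$, where $\chi=1$, which is exactly what yields Corollary \ref{cor:NumberOfSingularities} (and for a domain with boundary one should also say a word about the boundary behaviour of the vector field in Poincar\'e--Hopf, though for $\D^2$ the correction term $\chi(\S^1)=0$ makes this harmless).
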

\begin{proof}
Cover the image of $f$ by submersion charts $(U_i,\phi_i)$ for $\F$. This allows us to define the gradient of $\phi_i\circ f$ on $f^{-1}(U_i)$. Using a partition, these local vector fields can be patched together to a vector field $X$ on $M$.
It is easily checked that for any critical point $p$ the index of $f$ and the gradient $X$ are related by
\[ \operatorname{Index}(X,p) = (-1)^{\operatorname{Index}(f,p)}.\]
 As such the result follows from the Poincar\'e-Hopf theorem, see for example \cite{Guillemin-Pollack}.
\end{proof}

We now restrict to $\dim M = 2$ and $\dim N = 3$. In this case the pullback $f^*\F$ defines a singular foliation by lines on $M$, called the \textbf{characteristic foliation} of $f$.
To be precise, a leaf of $f^*\F$ is defined to be the preimage under $f$ of a leaf of $\F$.
On the complement of the critical points of $f$ the characteristic foliation is regular, and the singularities of $f^*\F$ correspond to the critical points of $f$. Their local models are given by Equation \ref{eq:MorseLocalModel}. The singularity corresponding to a critical point of index $0$ or $2$ is called a \textbf{center}, while the singularity corresponding to a critical point of index $1$ is called a \textbf{saddle}, see also Figure \ref{fig:SaddleConfigurations}.

Thus, Lemma \ref{lem:EulerCharacteristic} immediately implies the following:
\begin{corollary}\label{cor:NumberOfSingularities}
Suppose $f:\D^2 \to (M^3,\F)$ is Morse.
Then, the number of centers of the characteristic foliation $f^*\F$ is one more than the number of saddles. In particular, it has at least one center singularity.
\end{corollary}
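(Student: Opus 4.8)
The plan is to read the statement directly off Lemma~\ref{lem:EulerCharacteristic} applied with $M = \D^2$. The first point I would check is that that lemma is available here even though $\D^2$ has boundary: since $f$ is Morse, all of its singularities of tangency lie in the interior of $\D^2$, so the auxiliary vector field $X$ built in the proof of Lemma~\ref{lem:EulerCharacteristic} (by patching together leafwise gradients of the $\phi_i\circ f$) is nowhere vanishing along $\partial\D^2$. Hence a perturbation of $X$ supported in a collar of $\partial\D^2$, creating no new zeros, makes $X$ outward-pointing on the boundary, and the Poincar\'e--Hopf theorem for manifolds with boundary gives $\sum_{p\in\operatorname{Sing}(f)}\operatorname{ind}(X,p) = \chi(\D^2) = 1$; here the inward/outward ambiguity is harmless because $\chi(\partial\D^2)=0$. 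Combined with the identity $\operatorname{ind}(X,p) = (-1)^{\operatorname{Index}(f,p)}$ established in that proof, this yields
\[
\sum_{p\in\operatorname{Sing}(f)}(-1)^{\operatorname{Index}(f,p)} = 1.
\]

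The second step is a bookkeeping of signs. A Morse map from the $2$-manifold $\D^2$ into a $3$-manifold has only singularities of tangency of index $0$, $1$ or $2$, so every $p\in\operatorname{Sing}(f)$ is either a center (index $0$ or $2$) or a saddle (index $1$), with no further possibilities. A center contributes $(-1)^0 = (-1)^2 = +1$ to the sum above and a saddle contributes $(-1)^1 = -1$. Writing $c$ and $s$ for the number of centers and saddles of $f^*\F$ respectively, the displayed identity reads $c - s = 1$, that is $c = s+1$; in particular $c\geq 1$.

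I do not anticipate a genuine obstacle: the content is entirely contained in Lemma~\ref{lem:EulerCharacteristic}, and the only subtlety is the presence of the boundary circle, which is neutralised by the two observations that a Morse map has no boundary tangencies and that $\chi(\partial\D^2)=0$. Everything else is the sign count above.
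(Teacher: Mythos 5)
Your proof is correct and follows the same route as the paper, which simply reads the identity $c-s=\chi(\D^2)=1$ off Lemma~\ref{lem:EulerCharacteristic} and counts signs. Your extra care about applying Poincar\'e--Hopf on a manifold with boundary (using that a Morse map has no singularities on $\partial\D^2$) is a point the paper leaves implicit, but it does not change the argument.
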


\section{Proof of the main theorem}\label{sec:MainTheorem}

In light of Proposition \ref{prop:VanishingCycles} it suffices to show that if $(M,\F)$ does not have any vanishing cycles, then for any leaf $L$ there is an injection $\pi_1(L) \to \pi_1(M)$. This is a classic result which can be found in many places in the literature, see for example \cite[Proposition 9.2.5]{CandelConlon}, or \cite[Theorem 4.35]{CalegariBook}, except Lemma \ref{lem:immersedcappingdisk} below which is sometimes not explicitely stated. We include it here for completeness.

Suppose we are given a foliated manifold $(M^3,\F)$ and a leafwise curve $\gamma:S^1 \to L$ which is contractible in $M$. We start by fixing an immersed capping disk $f:\D^2 \to M$ as in the following lemma:

\begin{lemma}\label{lem:immersedcappingdisk}
Consider a foliated manifold $(M^3,\F)$ and $\gamma:\S^1 \to L$ a leafwise curve which is contractible in $M$. Then, there exists an immersion $f:\D^2 \to M$ satisfying:
\begin{enumerate}
\item $f(\partial \D^2) \subset L$, and $f|_{\partial \D^2}$ is leafwise homotopic (inside $L$) to $\gamma$.
\item $f|_{\Op(\partial \D^2)}$ is transverse to $\F$.
\end{enumerate}
\end{lemma}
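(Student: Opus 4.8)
The plan is to start from the hypothesis that $\gamma$ is contractible in $M$, so there is a continuous map $F:\D^2 \to M$ with $F|_{\partial \D^2} = \gamma$ (after smoothing, we may take $F$ to be smooth and agreeing with $\gamma$ on $\partial \D^2$, since $\gamma$ is already smooth and leafwise). The immersion requirement is the thing we do not get for free; the two natural routes are to perturb $F$ into an immersion directly (in dimension $2 \to 3$ a generic smooth map need not be an immersion, so this requires care) or, more robustly, to first collar-correct near the boundary and then appeal to a general-position/immersion approximation. I would take the following approach: first fix the behaviour near $\partial \D^2$, then handle the interior.

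\textbf{Step 1: normal form near the boundary.} Pick a foliation chart around a point of $\gamma$ and, using that $\gamma$ is an immersed (in fact embedded, or at least immersed) leafwise loop, build an embedding $h$ of a half-collar $\S^1 \times [0,\delta)$ into $M$ such that $h|_{\S^1 \times \{0\}}$ parametrizes $\gamma$, the slices $h(\S^1 \times \{0\})$ lie in $L$, and $h$ is transverse to $\F$ on the collar. Concretely, choose a vector field $v$ along $\gamma$ that is everywhere transverse to $\F$ (possible since $\F$ is coorientable and $\gamma(\S^1)$ is compact) and tangent to nothing problematic; then $h(\theta,r) := \exp_{\gamma(\theta)}(r\, v(\theta))$ works for small $\delta$, and is an immersion (indeed embedding onto its image for $\delta$ small) because $v$ is transverse to the leaf direction which contains $\dot\gamma$. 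This gives condition (ii) near the boundary automatically, and condition (i) is built in.

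\textbf{Step 2: extend to an immersion on all of $\D^2$.} Identify $\D^2$ with $\S^1 \times [0,\delta) \cup_\partial (\text{inner disk})$ and glue $h$ on the collar to an arbitrary smooth extension over the inner disk (using that $\gamma$ bounds, i.e. the loop $h(\cdot,\delta/2)$ is still contractible since it is homotopic through the collar to $\gamma$). Call this smooth map $f_0: \D^2 \to M$, which already satisfies (i) and (ii) but may fail to be an immersion in the interior. Now apply a standard immersion-approximation result: since $2 = \dim \D^2 < 3 = \dim M$, the Hirsch--Smale immersion theory (or more elementarily, a jet-transversality argument) lets us perturb $f_0$, relative to a neighborhood of $\partial \D^2$ where it is already an immersion, to a genuine immersion $f$ that still agrees with $f_0$ near $\partial \D^2$. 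The relevant point is that the set of non-immersive $1$-jets has codimension $\dim M - \dim \D^2 + 1 = 2 \leq 2$ in the jet space, so a generic $2$-parameter family (i.e. a generic map of a surface) can be made to avoid it; doing this relative to the closed set $\Op(\partial \D^2)$ is the relative version, which holds because $f_0$ is already immersive there. After this perturbation, (i) and (ii) persist because we did not touch a neighborhood of $\partial \D^2$.

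\textbf{Main obstacle.} The delicate point is making the perturbation in Step 2 \emph{relative} to the boundary while keeping it an immersion globally — in dimensions $2 \to 3$, immersions are $C^2$-dense but not $C^0$-dense in a way that interacts with prescribed boundary data, so one has to be slightly careful to use the parametric/relative form of the Hirsch immersion theorem (valid here since $\D^2$ is open-handed: it deformation retracts to a point and the immersion condition is an open, non-empty, "ample" differential relation for a surface in a $3$-manifold). An alternative, more hands-on argument avoids immersion theory: keep $f_0$ fixed near the boundary, then on the inner disk successively resolve the finitely many singular points of a generic $f_0$ by local surgeries supported in small balls — each such singular point is, generically, a point where $df_0$ drops rank by one, and can be perturbed away locally. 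Either way, the conclusion is an immersion $f:\D^2 \to M$ with $f(\partial\D^2)\subset L$, $f|_{\partial\D^2}$ leafwise homotopic to $\gamma$, and $f$ transverse to $\F$ near $\partial\D^2$, as required.
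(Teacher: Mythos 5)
Your Step 1 and the overall architecture (fix a transverse collar near the boundary, extend smoothly over the disk, then upgrade to an immersion relative to the boundary) match the paper's strategy. The gap is in Step 2, and it is a genuine one: the dimension count does not do what you claim. The set of non-injective $1$-jets of maps $\D^2 \to M^3$ has codimension $3-2+1 = 2$ in the jet space, and the source $\D^2$ has dimension $2$; Thom transversality therefore makes a generic map meet this stratum in \emph{isolated points} (cross-cap singularities), not avoid it --- avoidance would require codimension at least $3$. These isolated singular points cannot in general be removed by local surgeries either: their count modulo $2$, relative to the prescribed boundary data, is a homotopy invariant. This is exactly the obstruction the paper has to confront. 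The relative Smale--Hirsch theorem reduces the problem to finding an injective bundle map $G:T\D^2 \to TM$ covering $g$ that agrees with $\d g$ along $\partial \D^2$; after trivializing $g^*TM$, such a $G$ exists if and only if the boundary frame defines the trivial class in $\pi_1(V_2(\R^3)) = \Z_2$, which translates into the rotation number of $\dot{\gamma}$ being odd.

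The paper resolves this by exploiting the freedom built into the statement: conclusion (i) only requires $f|_{\partial \D^2}$ to be leafwise homotopic to $\gamma$, not equal to it. One concatenates $\gamma$ with a small leafwise loop of rotation number $1$ (a leafwise homotopy, though not through immersed curves), changing the parity of the rotation number so that the $\Z_2$ obstruction vanishes, and only then applies Smale--Hirsch relative to the boundary. Your proposal keeps the boundary parametrization fixed throughout, so if it worked it would prove the stronger statement with $f|_{\partial \D^2}$ equal to (a collar reparametrization of) $\gamma$; that stronger statement fails whenever the obstruction class is nontrivial. To repair the argument you need to (a) correct the transversality claim, and (b) insert the step that adjusts $\gamma$ within its leafwise homotopy class to kill the $\Z_2$ obstruction before invoking the relative h-principle.
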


\begin{proof}
By a $C^0$-small leafwise homotopy we can assume $\gamma$ is immersed. Slightly pushing it off, transverse to $\F$, we obtain an immersed cylinder transverse to $\F$. Since any homotopy of $\gamma$ stays contractible in $M$, we find a smooth map $g:\D^2 \to M$ extending the immersed cylinder. That is, $g|_{\partial \D^2} = \gamma$, and $g|_{\Op \partial \D^2}$ is an immersion transverse to $\F$.

The Smale-Hirsch immersion theorem \cite{Hirsch59}, see also \cite[Theorem 8.2.1]{EliMisch}, implies that to find our desired immersion $f$, it suffices to find an injective bundle map $G:T\D^2 \to TM$ covering $g$, and such that $G$ agrees with $\d g$ at points in $\partial D^2$. 

The differential of $g$ defines a bundle map $\d g: T\D^2 \to g^*TM$ covering the identity. Using that the disk is contractible we fix a framing $e := (e_1,e_2,e_3)$ of $g^*TM$ such that
$\F = \langle e_1,e_2 \rangle$ (implying $e_3 \pitchfork \F$), and 
\[ \d_p g(\partial_\theta) = e_2, \quad \d_p g(\partial_r) = e_3,\quad \forall p \in \partial \D^2,\]
where $(r,\theta)$ denote polar coordinates on $\D^2$.
In the above framing $\d g|_{\partial D^2}$ can be interpreted as family of orthonormal $2$-frames
\[ G_\partial:\S^1 \to V_2(\R^3),\quad t \mapsto (\dot{\gamma}(t),e_3).\]
As such, the existence of $G$ is equivalent to $G_\partial$ defining the zero class in $\pi_1(V_2(\R^3)) = \Z_2$, which in turn is equivalent to the rotation number of $\dot{\gamma}:\S^1 \to \R^2$ being odd. 

Let $U \subset L$ be a contractible neighborhood of $\gamma(0)$, and extend the framing $e$ over $U$. Let $\sigma:\S^1 \to U$ be an immersed curve with $\sigma(0) = \gamma(0)$, $\dot{\sigma}(0) = \dot{\gamma}(0)$, and $\rot(\dot{\sigma}) = 1$. Then, $\gamma$ is leafwise homotopic (though not through immersed curves) to the immersed curve $\gamma \circ \sigma$ which has rotation number $\rot(\gamma \circ \sigma) = \rot(\gamma) + 1$. Thus, we can assume that $\gamma$ has odd rotation number implying that an injective bundle map $G:T\D^2 \to TM$ covering $g$ exists. Applying the Smale-Hirsch immersion theorem relative to the boundary $\partial D^2$ gives the desired immersion $f$.
\end{proof}

The induced characteristic foliation $\wtd{\F} := f^*\F$ is tangent to the boundary $\partial D^2$. By compactness it contains finitely many center and saddle singularities. After slightly perturbing $f$ if necessary, we may assume that each of the leaves of $\wtd{\F}$ contains at most one singularity. 
For each center singularity $c$, let $\Delta$ be the maximal set satisfying:
\begin{itemize}
    \item $\Delta$ contains $c$ and is $\wtd{\F}$ saturated.
    \item Each leaf of $\wtd{\F}$ contained in $\Delta$ bounds an immersed disk in the corresponding leaf of $\F$.
\end{itemize}
The local model of a center singularity shows that $\Delta$ is non-empty, and it follows from local Reeb stability that $\Delta$ is open. Since $\Delta$ is topologically a disk, the boundary $\partial \Delta:= \bar{\Delta} \setminus \mathring{\Delta}$, is a closed (not necessarily immersed) curve contained in a leaf of $\wtd{\F}$.

Now there are several possibilities. First, assume that $\wtd{\F}$ has only one singularity which is necessarily a center. In this case $\partial \Delta$ is contained in a non-singular leaf of $\wtd{\F}$ and hence itself immersed. That is, $\partial \Delta$ is a vanishing cycle giving a contradiction by Proposition \ref{prop:VanishingCycles}.

Next, suppose that $\wtd{\F}$ has more than one center. The previous argument shows that for each center $c$ the corresponding curve $\partial \Delta$ is singular. Since each leaf of $\wtd{\F}$ contains at most one singular point, so does $\partial \Delta$, and this singularity is a saddle. 
By Corollary \ref{cor:NumberOfSingularities} there must be at least two centers $c_1$ and $c_2$ for which the boundaries $\partial \Delta_1$ and $\partial \Delta_2$ share the same saddle, and thus intersect in a point. There are two possible configurations, depicted in Figure \ref{fig:SaddleConfigurations} below.

In the first case, the union $\partial \Delta_1 \cup \partial \Delta_2$ bounds a simply connected domain in the corresponding leaf of $\F$. In the second case, suppose without loss of generality that $\partial \Delta_1 \subset \Delta_2$, and denote the saddle singularity by $s$. Then, the closed curve $\partial \Delta_2 \setminus \partial \Delta_1 \cup s$ bounds a disk in the corresponding leaf of $\F$. Thus, in both cases, $f$ can be replaced by $\wtd{f}$ for which the induced characteristic foliation has one less center singularity. Hence we can inductively reduce to the case that $\wtd{\F}$ has only one center, which we considered above.

\begin{figure}[ht!]
    \centering
    \def\svgwidth{\textwidth}
    \includegraphics[scale=0.88]{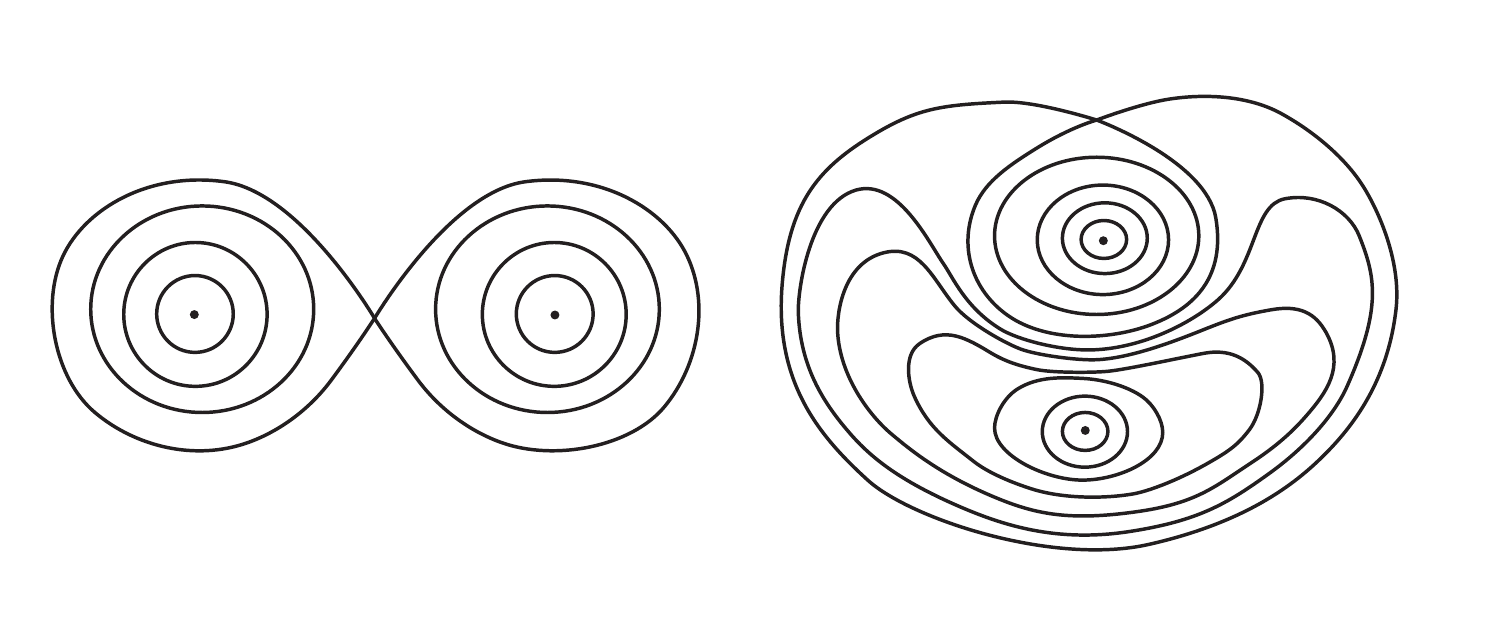}
    \caption{The two possible configurations of a saddle singularity together with its corresponding center singularities.}
    \label{fig:SaddleConfigurations}
\end{figure}

\newpage

\bibliographystyle{abbrv}
\bibliography{Bibliography}
\end{document}